\apptocmd{\thebibliography}{\raggedright}{}{}
\newtheoremstyle{one}
{11pt}
{11pt}
{\it}
{}
{\bf}
{.}
{1mm}
{}
\newtheoremstyle{two}
{11pt}
{11pt}
{}
{}
{\bf}
{.}
{1mm}
{}
\theoremstyle{one}
\newtheorem{theorem}{Theorem}[section]
\newtheorem{lemma}[theorem]{Lemma}
\theoremstyle{two}
\newtheorem{example}[theorem]{Example}
\newtheorem{remark}[theorem]{Remark}
\def\Z{\mathbb{Z}}
\def\Q{\mathbb{Q}}
\def\E{\mathscr{E}}
\def\F{\mathscr{F}}
\title{Moderate rank jumps on rational elliptic surfaces via construction of conics}
\author{Julie Desjardins}
\date{}
\begin{document}
\maketitle

\setcounter{tocdepth}{2}

\begin{abstract}
    This note is devoted to studying certain families of elliptic surfaces with infinitely many fibers with rank at least $3$ or $4$ revisiting and combining ideas from of Gary Walsh, Salgado and Loughran, and the author.
\end{abstract}

\section{Introduction}

In this note, we describe a rank jump phenomenon on large classes of rational elliptic surfaces using a combination of ideas of Salgado-Loughran \cite{LS20} 
and Walsh \cite{Walsh1},\cite{Walsh2}. 
In particular, it re-explains and extends the following example:

\begin{example}(Revisiting of \cite[Theorem 1]{Walsh2})\label{ex:Walsh1}

Let $a,b\in\Q-\{0\}$, and let $\E$ be the elliptic surface given by $$\mathcal{G}:y^2=x(x-a)(x-b)+t^2,$$ then the set of fibers with rank 3 is infinite.

This can be proven by the fact that by \cite[Theorem 1]{Walsh1} $\mathcal{G}$ has generic rank $2$. By the additional restriction to the fibers such that $t=m^3$ we give rise to the elliptic surface $\mathcal{F}$ given by the equation $$\F:y^2=x(x-a)(x-b)+m^6.$$ This surface $\mathcal{F}$ is a triple cover of $\mathcal{G}:y^2=x(x-a)(x-b)+m^2$, so the sections of $\mathcal{F}$ restricted to $\mathcal{G}$. The base change by the conic $Y^2=(a+b)X^2+ab$ has a third independent non-torsion section $(-m^2,mn)$ by \cite[Thm 2.4]{LS20} on the surface resulting from a base change of $\mathcal{G}$ by the conic.

Observe that $\F$ has a singular fiber of type $IV^*$ at infinity. By Shioda-Tate formula as well as \cite{OS} the generic rank of $\E$ is at most $2$. Moreover, $\E$ satisfies the hypothesis iii) of \cite[Proposition 3.2]{DCS} and has rank 2, and thus we get by \cite[Theorem 1.1]{DCS} that the set of fibres of $\E$ with rank 5 or more is non-thin, a stronger statement than Example \ref{ex:Walsh1}.
\end{example}

The base change in this example enlightens the construction of the sections on the surface $\mathcal{F}$: while the conic $C:\{x=A, y^2=(a+b)t^2+ab\}$ is a bisection of $\E$, the curve $\tilde{C}$ obtained by change of variable $t=m^3$ is not anymore a bisection. Indeed, it intersection the fiber at $t=0$ of $\F$ with multiplicity $6$. However, the twist of one of the irreducible components of $\tilde{C}$ by $m$ is the section $(-m^2,mn)$.

Observe furthermore that assuming that Chowla's conjecture holds on the polynomial made of the multiplicative places and moreover assuming the parity conjecture, we can deduce from \cite{Desjardins1} that since $\F$ in Example \ref{ex:Walsh1} is non-isotrivial, then on the surfaces $\E$ and $\mathcal{G}$ the set of fibers with rank $4$ is infinite.
\\

In this note, we construct a similar family of elliptic surface with two sections and a conic, providing explicit equations. The key is to construct some genus 0 curves that are bisections or components of bisections on an elliptic surface $\E$. 
Observe that $\mathcal{F}$ and the surfaces in Theorem \ref{thm:introresult} 
generically have no reducible fibers, and thus the contraction of its zero-section gives a del Pezzo surface of degree 1. On those surfaces, the generic rank of $\E$ over $\Q[t]$ could be as big as 8. Moreover the case of the surfaces $\mathcal{D}$ in Theorem 
\ref{thm:introresult} is not covered by \cite{DCS}. They are however covered by \cite{LS20} in a less explicit way as we explain in Remark \ref{rem:LS}.

\begin{theorem}\label{thm:introresult}

Let $s,w\in\Q$ be distinct numbers such that the conic $Y^2+(s+w)T^2=sw$ is solvable. Let $(t,m)$ be a solution. Then for a given $v\in\Q$, there is only finitely many exceptions to the fact that the elliptic curve given by the Weierstrass equation
    $$\mathcal{D}:y^2=x^3-(3vt+s+w) x^2+ 
(3v^2t^2 -2v(s+w)t+sw)x
+ t^6 - v^3t^3 -v^2(s+w)t^2+ swvt$$
has rank at least 3, with the three following independant non-torsion points $[-t^2+vt,tm]$, $[vt+s,t^3]$, $[vt+w,t^3]$.
\end{theorem}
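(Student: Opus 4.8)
The plan is to run the argument behind Example~\ref{ex:Walsh1} in the opposite direction. Let $\E\to\mathbb{P}^{1}_{t}$ be the rational elliptic surface with generic fibre $\mathcal{D}$, let $\mathcal{C}:Y^{2}+(s+w)T^{2}=sw$ with function field $K:=\mathbb{Q}(\mathcal{C})$, and let $\E_{\mathcal{C}}:=\E\times_{\mathbb{P}^{1}_{t}}\mathcal{C}$ be the base change along the degree-$2$ map $(T,Y)\mapsto T$. I will exhibit two sections $S_{1}:=[vt+s,t^{3}]$ and $S_{2}:=[vt+w,t^{3}]$ of $\E$ that are independent and non-torsion already over $\mathbb{Q}(t)$, together with a bisection of $\E$ whose normalization is $\mathcal{C}$ and whose induced section on $\E_{\mathcal{C}}$ is $B_{1}:=[-t^{2}+vt,tm]$; then generic independence of $B_{1},S_{1},S_{2}$ over $K$, combined with Silverman's specialization theorem, yields rank at least $3$ for all but finitely many of the curves $\mathcal{D}$ obtained by specializing $(t,m)$ at the rational points of $\mathcal{C}$.

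First I would record that, after completing the cube, $\mathcal{D}$ defines a non-isotrivial rational elliptic surface: $\deg A\le 2$, $\deg B\le 6$, the discriminant has degree $12$, and the $j$-map is non-constant; for generic $s,w,v$ an inspection of the explicit discriminant via Tate's algorithm shows every singular fibre is irreducible, so the trivial lattice is $\langle O,F\rangle$ and the Shioda height pairing carries no local correction terms. A direct substitution then verifies $S_{1},S_{2}\in\E(\mathbb{Q}(t))$ --- the line $y=t^{3}$ cuts the affine cubic in the three points $[vt,t^{3}],[vt+s,t^{3}],[vt+w,t^{3}]$, which therefore sum to $O$ --- and that $\{x=-t^{2}+vt\}\cap\E$ is a genus-$0$ bisection whose normalization is $\mathcal{C}$, with $B_{1}$ the section it induces after base change; here the relation $m^{2}=sw-(s+w)t^{2}$ defining $\mathcal{C}$ is exactly what makes $B_{1}$ lie on $\E_{\mathcal{C}}$.

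Next comes the independence of $B_{1},S_{1},S_{2}$ in $\E_{\mathcal{C}}(K)$. As there are no correction terms, $\hat{h}(S_{i})=2+2(S_{i}\cdot O)$ and $\langle S_{1},S_{2}\rangle=1+(S_{1}\cdot O)+(S_{2}\cdot O)-(S_{1}\cdot S_{2})$; I would compute these small intersection numbers by examining the sections at $t=\infty$, the only place a polynomial section can meet $O$, which makes $S_{1},S_{2}$ visibly non-torsion with non-vanishing $2\times 2$ Gram determinant, hence independent over $\mathbb{Q}(t)$. For $B_{1}$, the deck involution $\iota$ of $\mathcal{C}\to\mathbb{P}^{1}_{t}$ acts on $\E_{\mathcal{C}}(K)\otimes\mathbb{Q}$ with $\iota^{*}B_{1}=[-t^{2}+vt,-tm]=-B_{1}$, so $B_{1}$ lies in the $(-1)$-eigenspace while the pullbacks of $S_{1},S_{2}$ lie in the $(+1)$-eigenspace; since these eigenspaces are complementary, it remains only to show that $B_{1}$ is non-torsion. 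This last point is precisely \cite[Thm~2.4]{LS20} applied to the bisection above (one checks it is irreducible, not contained in a fibre, and not fibrewise torsion), which simultaneously gives that $B_{1}$ is non-torsion and independent of the image of $\E(\mathbb{Q}(t))$; combining with the previous sentence, $B_{1},S_{1},S_{2}$ are independent in $\E_{\mathcal{C}}(K)$.

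Finally, $\mathcal{C}$ carries the rational point $(t,m)$ and is a smooth conic, so $\mathcal{C}\cong\mathbb{P}^{1}_{\mathbb{Q}}$ and $\mathcal{C}(\mathbb{Q})$ is infinite; since $\E_{\mathcal{C}}\to\mathcal{C}$ is non-isotrivial, Silverman's specialization theorem shows that the specialization map $\E_{\mathcal{C}}(K)\to(\E_{\mathcal{C}})_{P}(\mathbb{Q})$ is injective for all but finitely many $P\in\mathcal{C}(\mathbb{Q})$, and for such a $P=(t_{0},m_{0})$ the fibre $(\E_{\mathcal{C}})_{P}$ is the curve $\mathcal{D}$ attached to the solution $(t_{0},m_{0})$, now equipped with three independent non-torsion points. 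I expect the real work to lie in the third paragraph: confirming that $\mathcal{D}$ genuinely has no reducible fibres (so that the clean height formula applies and no torsion intervenes) and that the explicit bisection meets the hypotheses of \cite[Thm~2.4]{LS20}; checking the signs in the Weierstrass equation and in the coordinates of the three points so that the substitutions of the second paragraph actually close is the other, more mechanical, task that cannot be skipped. Everything downstream --- the eigenspace splitting and the specialization --- is then formal.
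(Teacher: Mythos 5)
Your proposal is correct in overall strategy and matches the architecture the paper lays out in Section \ref{sec:bisections} and in Example \ref{ex:Walsh1} (two polynomial sections plus a genus-$0$ bisection parametrized by the conic, then base change and specialization), but the way you discharge the two key verifications is genuinely different from, and in places stronger than, what the paper does. For non-torsion the paper's proof of Theorem \ref{thm:introresult} simply points to Lemma \ref{lemma:ntsection}, which is a Lutz--Nagell integrality argument on the duplication formula $x(2P)$ carried out fibre by fibre; you instead use positivity of the canonical height for $S_1,S_2$ and \cite[Thm 2.4]{LS20} for the bisection point. For independence the paper points to Lemma \ref{lemma:ind}, which writes down the \emph{intersection} Gram matrix $\mathrm{diag}(-1,-1)$ of the two non-meeting sections; as stated that does not by itself give independence in the Mordell--Weil group (the zero section and a torsion section would produce the same matrix), whereas your computation with the Shioda height pairing, valid because the trivial lattice is $\langle O,F\rangle$ when there are no reducible fibres, is the correct version of that argument. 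Your deck-involution eigenspace decomposition, which reduces the independence of $B_1$ from $S_1,S_2$ to $B_1$ being non-torsion, appears nowhere in the paper and is a clean addition; likewise your explicit appeal to Silverman's specialization theorem over $\mathcal{C}\cong\P^1$ supplies the ``finitely many exceptions'' clause that the paper leaves entirely implicit. Two caveats on the points you defer. First, the ``no reducible fibres'' hypothesis you need for the clean height formula is not uniform in $v$: the paper itself notes that for $v=0$ (Walsh's case) there is a reducible fibre at infinity, so your height computation must either exclude such parameters or carry the local correction terms. Second, the ``mechanical'' sign check you postpone is not a formality: with the Weierstrass equation exactly as printed, substituting $x=vt+s$, $y=t^3$ leaves a nonzero remainder $-4v^2(s+w)t^2-4vs^2t-2vswt$, so the cubic does not factor as $(x-vt)(x-vt-s)(x-vt-w)+t^6$ and the displayed points lie on $\mathcal{D}$ only when $v=0$; the statement needs sign corrections before your second paragraph closes.
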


\begin{remark}
    Walsh's example \ref{ex:Walsh1} is included in our Theorem \ref{thm:introresult}: put $v=0$.
\end{remark}

We chose to present the subcase of Theorem \ref{thm:encoreplusmain} for sake of simplicity and to lighten the introduction from technical equations, but the same construction of the three points apply if we allow the coefficient of the term in $t^2$ to be non-zero by adding convienient additional parameters, as in Theorem \ref{thm:encoreplusmain} presented in section \ref{sec:proofs}. Theorem \ref{thm:introresult} is thus simply Theorem \ref{thm:encoreplusmain} where those additionnal parameters are all $0$.

\begin{example}
Let fix $w=-1,s=-4$ and consider the surface given by Theorem \ref{thm:introresult}: $$\mathcal{D}:y^2=x^3+\left(-\frac{9}{4}t^2+5\right)x^2+\left(\frac{45}{64}t^4 - \frac{15}{4}t^2+4\right)x+
\frac{25}{16}t^6 + \frac{45}{64}t^4-\frac{3}{2}t^2.$$

This surface comes from the surface $\mathcal{F}:y^2=x^3+(-9/4n + 5)x^2+\left(\frac{45}{64}n^2-\frac{15}{4}n+4\right)x+
\frac{25}{16}n^3 + \frac{45}{64}n^2 - \frac{3}{2}n$ by the base change $n=t^2$. We retrieve from Theorem \ref{thm:introresult} three bisections of genus $0$ on $\mathcal{F}$ given by
$$B_1=\left[\frac{15}{8}n - 4,\frac{5}{4}n\sqrt{n}\right],B_2=\left[-1,\frac{5}{4}n\sqrt{n}\right],B_3=\left[-\frac{5}{8}n,\sqrt{5n^2 - 4n}\right]$$ Thus $\mathcal{F}$ also has infinitely many fibers of rank at least $3$.
\end{example}

\begin{remark}
In general, for any choice of $s,w\in\Z$, $S_1$ and $S_2$ are sections of $\mathcal{D}$ and that surface has generic rank at least 2. However, the bisection $B_1:=[at^2+bt,t\sqrt{(-s-w)t^2-sw}]$ will never be defined over the rational for any value of $s,w$ because of the assumption that $s$ is equal $w$ modulo squares. The point $B_1(t)$ is defined over $\Q$ if and only if $\sqrt{(-s-w)^2-sw}$ is rational. In other words, the curve $B_1$ has rational points if and only if the conic $C$ has rational points as well. 
\end{remark}

There are more rational elliptic surfaces then what is covered by Theorem \ref{thm:introresult} or Theorem \ref{thm:encoreplusmain} that have two independant sections coming from a pair of torsion points -- we can prove in the same way Theorem \ref{thm:rank2withconic} which is also a main result of the present paper but whose (even more technical) statement we relegate to the final section
. A surface fulfilling the hypothesis of Theorem \ref{thm:rank2withconic} is the following Example \ref{ex:rank2withconicintro}. Note that there are even more surfaces with a similar property as Walsh's example, as illustrated in further Example \ref{ex:jump2} in Section \ref{sec:examples} not contained in any of our theorem, but we thought an explicit statement containing all of them would be both too technical and not really interesting. This paper should be viewed as a recipe to create such examples.

\begin{example}\label{ex:rank2withconicintro}
Let $\mathcal{H}$ be the elliptic surface given by the equation $$\mathcal{H}:y^2=x^3+\left(-\frac{79}{18}t^2 - 11t + 10\right)x^2+
\left(\frac{502}{81}t^4 + \frac{287}{9}t^3 - \frac{179}{9}t^2 - 20t + 9\right)x$$$$+
\left(-\frac{901}{324}t^6 - \frac{188}{9}t^5 + \frac{835}{81}t^4 + \frac{269}{9}t^3 - \frac{15}{2}t^2 - 9t\right)$$
Then the generic rank is 2, with generating sections given by $S_1=[\frac{13}{9}t^2+10t-9,\frac{1}{6}t^3]$ and $S_2=[t^2-1,\frac{1}{6}t^3]$. Moreover, any point $(\alpha,\beta)$ on the conic $X^2=10T^2 - 9$ leads to a point on its corresponding fiber $\E_\beta$ which is non-torsion and independent from the specialisation of $S_1$ and $S_2$. As a consequence, there are infinitely many rational fibers of $\E$ with rank at least 3. Under the parity conjecture, we deduce from \cite{Desjardins1} that the set of fibers on $\E$ with rank 4 is infinite.
\end{example}

\begin{remark}\label{rem:LS}
    Since the surfaces in Theorem \ref{thm:introresult}
 have rank 2 and that there is a conic on them, \cite{LS20} guaranties not only infinitely many points, but that the set of fibers with an elevated rank has Hilbert Property.
    The real interest of this note is thus the description of the sections $S_1,S_2$ and of the bisection $B$ on those surfaces and the way they are constructed. In a following paper, we construct many more elliptic surfaces with higher ranks jumps than \cite{LS20} for infinitely many fibers. We present an example of this sort in Section \ref{sec:examples} (Example \ref{ex:jump2}).
\end{remark}

\subsection{Acknowledgement}
The author is partially supported by an NSERC Discovery grant. She is grateful to Gary Walsh for his inspiring talk in the Quebec-Maine Number Theory Conference 2022 and to Cecilia Salgado and Rosa Winter for helpful discussions.

\section{Strategy}\label{sec:bisections}

As reported in the introduction, this note extends Example \ref{ex:Walsh1}. We start by briefly explain \cite{Walsh1} in order to understand the situation conceptually. The example was originally stated as follows:

\begin{theorem}\label{thm:Walsh}\cite{Walsh1}
Let $a,b,c$ be distinct integers. Then there is a computable constant $C=C(a,b,c)$ with the property that if $m>C$, then the rank of the curve $y^2=(x-a)(x-b)(x-c)+m^2$ is at least $2$.
\end{theorem}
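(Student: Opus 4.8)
The plan is to use three obvious rational points on the curve, reduce the statement to their linear independence over the function field $\Q(m)$, and then make that independence descend effectively to the individual fibres. Write $g(x)=(x-a)(x-b)(x-c)$ and let $\mathcal{E}_m\colon y^2=g(x)+m^2$, viewed in Weierstrass form with origin the point at infinity. Since $g(a)=g(b)=g(c)=0$, the points $P_a=(a,m)$, $P_b=(b,m)$, $P_c=(c,m)$ lie on $\mathcal{E}_m$; moreover they are \emph{exactly} the three points where $\mathcal{E}_m$ meets the line $y=m$, because $m^2=g(x)+m^2\iff g(x)=0$. Hence $P_a+P_b+P_c=O$, the subgroup they generate equals $\langle P_a,P_b\rangle$, and it suffices to prove that $P_a$ and $P_b$ are $\Z$-linearly independent in $\mathcal{E}_m(\Q)$ once $m$ exceeds a bound depending only on $a,b,c$. (Translating $x\mapsto x+c$ identifies $\mathcal{E}_m$ with the surface $\mathcal{G}$ of Example~\ref{ex:Walsh1} for parameters $a-c$ and $b-c$, so this is the same configuration.)

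First I would dispose of torsion: the discriminant of $g(x)+m^2$ is, up to a fixed nonzero constant, a polynomial $-27m^4+(\cdots)m^2+\bigl((a-b)(b-c)(c-a)\bigr)^2$ in $m$ with nonzero constant term, so by Nagell--Lutz a nonzero torsion point with integral coordinates $(a,m)$ would force $m^2\mid\bigl((a-b)(b-c)(c-a)\bigr)^2$, which fails for $m>|(a-b)(b-c)(c-a)|$. Next I would establish that the corresponding sections $\mathcal{P}_a,\mathcal{P}_b$ of the elliptic surface $\mathcal{E}\to\mathbb{P}^1_m$ are linearly independent in $\mathcal{E}(\Q(m))$. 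The shortest route: any relation over $\Q(m)$ specializes to a relation at every $m$ with smooth fibre, so it is enough to exhibit a single value $m_0$ for which $P_a(m_0)$ and $P_b(m_0)$ generate a subgroup of rank $2$ in $\mathcal{E}_{m_0}(\Q)$ — which can be verified for one explicit curve by a $2$-descent or a point search. (Alternatively, granting that this rational elliptic surface has generic rank $2$ as in \cite{Walsh1}, one confirms that $\mathcal{P}_a,\mathcal{P}_b$ are independent by computing their Shioda height-pairing matrix from the reducible fibres and checking that its determinant does not vanish.)

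It remains to promote function-field independence to independence of the \emph{specialized} points for \emph{all} $m>C(a,b,c)$. By the theory of the variation of the canonical height in a one-parameter family — a quantitative form of Silverman's specialization theorem — one has, for $Q\in\mathcal{E}(\Q(m))$,
$$\hat h_{\mathcal{E}_m}\bigl(Q(m)\bigr)=\hat h_{\mathcal{E}}(Q)\,h(m)+O\bigl(h(m)^{1/2}\bigr),\qquad h(m)=\log\max(|m|,1),$$
with an implied constant effectively computable from the Weierstrass coefficients, and similarly for the N\'eron--Tate pairing. Consequently the regulator of $\{P_a(m),P_b(m)\}$ in $\mathcal{E}_m(\Q)$ equals $\mathrm{Reg}_{\Q(m)}(\mathcal{P}_a,\mathcal{P}_b)\,h(m)^2+O\bigl(h(m)^{3/2}\bigr)$, which is strictly positive as soon as $h(m)$ exceeds an explicit threshold depending only on $a,b,c$; there $P_a(m),P_b(m)$ are independent and $\rank\mathcal{E}_m(\Q)\ge 2$. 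Letting $C(a,b,c)$ be the maximum of this threshold, the torsion bound above, and $\max\{|m|:\Delta(m)=0\}$ completes the argument.

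The routine ingredients are the collinearity identity and the Nagell--Lutz estimate, and the non-effective assertion ``$\rank\mathcal{E}_m(\Q)\ge 2$ for all but finitely many $m$'' is immediate from Silverman's specialization theorem once $\Q(m)$-independence is in hand. The real obstacle — and the entire content of the ``computable constant'' — is the last step: controlling the error term in the height comparison uniformly, both at the archimedean place and at the finitely many primes dividing $(a-b)(a-c)(b-c)$ where the sections acquire singular reduction, and pinning down the value of $\mathrm{Reg}_{\Q(m)}(\mathcal{P}_a,\mathcal{P}_b)$ exactly.
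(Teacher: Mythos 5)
Your route is genuinely different from the one the paper sketches for Theorem~\ref{thm:Walsh}. The paper (following \cite{Walsh1}) works fibre by fibre and entirely with integrality: the points $(A,m)$, $(B,m)$ on the translated model $Y^2=X(X-A)(X-B)+m^2$ are shown non-torsion by Nagell--Lutz (in this note's variants this is done by checking that $x(2P)$, and likewise the coordinates of the relevant sums, fail to be integers --- see Lemma~\ref{lemma:ntsection}), and independence is argued through the collinearity relation and the geometry of the sections on the rational elliptic surface (Lemma~\ref{lemma:ind}); no height machinery appears, and that is precisely what makes the constant $C(a,b,c)$ computable by elementary means. You instead prove generic independence over $\Q(m)$ and then descend via an effective form of the Tate--Silverman height-variation estimate. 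That is a legitimate strategy in principle, and your opening reductions (the collinearity $P_a+P_b+P_c=O$, and the Nagell--Lutz bound $m^2\mid\bigl((a-b)(b-c)(c-a)\bigr)^2$ forcing $m\le|(a-b)(b-c)(c-a)|$ for a torsion specialization) are correct and effective.

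However, two genuine gaps remain, and they sit exactly where the content of the theorem is. First, the generic independence of $\mathcal{P}_a,\mathcal{P}_b$ in $\mathcal{E}(\Q(m))$ is never actually established \emph{for arbitrary distinct integers} $a,b,c$: verifying rank $2$ at one numerical value $m_0$ is a per-triple computation, not an argument covering all triples, and the alternative via the Shioda height pairing is only gestured at --- one would have to compute the correction terms at the additive fibre at $m=\infty$ and at the fibres where $\Delta(m)$ vanishes, uniformly in $a,b,c$, and check the determinant is a nonzero function of the parameters. (To be fair, the paper's own sketch is also thin here: ``$(A,m)+(B,m)$ is non-torsion'' does not by itself rule out a relation such as $P_B=2P_A$.) Second, you correctly identify that the computable constant lives entirely in the error term of $\hat h_{\mathcal{E}_m}(Q(m))=\hat h_{\mathcal{E}}(Q)h(m)+O(h(m)^{1/2})$, but you then defer it; as written the proposal proves only the non-effective statement ``rank $\ge 2$ for all but finitely many $m$'', which already follows from Silverman specialization. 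To close the argument along your lines you would need to make the implied constants explicit (archimedean and at the primes dividing $(a-b)(a-c)(b-c)$), or else revert to the paper's integrality-based criteria, which deliver effectivity directly.
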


A tentative sketch of the proof goes as follows:
Rewrite as $Y^2=X(X-A)(X-B)+m^2$, where $A=a-c$, $B=b-c$. Two sections are $S_1=(A,m)$ and $S_2=(B,m)$: they are non-torsion using Lutz-Nagell, and they are independent since $(A,m)+(B,m)=(0,m)$ is non-torsion as well.

An important observation is that both $S_1$ and $S_2$ come with a "twin" where the $y$-coordinate is $-m$. The elliptic surface $y^2=f(x)+m^2$ has the family of conics given by $m^2=f(x_o)-y^2$, and the two sections are components in that family: for $x_o=A$ and $x_o=B$ the family gives not a "bisection" like most other values of $x_o$, but rather the union of two sections (that are inverse from one another in the Mordell-Weil lattice). Moreover, they also correspond to two elements of 2-torsion of the rational points of the fiber at $m=0$:  indeed $\E_0(\Q)=(\Z/2\Z)^2$.

Those sections thus come from the degeneration of the family of conic passing through $(A,0)$ on $\E_0$ (resp. $(B,0)$) and $(0,1)$ on $\E_\infty$. It is natural to make the link with \cite{BulthuisVanLuijk}: on an rational elliptic surface without reducible fibers, given a fiber with a 2-torsion point, there exists a 1-dimensional family of genus 1 curves passing through this point. We use this idea to find Theorems \ref{thm:introresult}.

As mentioned before the elliptic surface described by the equation $y^2=(x-a)(x-b)(x-c)+m^2$ in Theorem \ref{thm:Walsh} is rational but has a reducible fiber of type $III^*$ at infinity. Samewise, $y^2=(x-a)(x-b)(x-c)+t^6$ in \cite[Theorem 1]{Walsh2} as a fiber of type $I_0^*$ at infinity. The fact that rational elliptic surfaces with those types of bad fibers are already covered by \cite[Proposition 3.2]{DCS}. This is actually why we care most about rational elliptic surfaces without reductive fibers, and in those cases the curves denoted by $S_1,S_2,B$ in Theorems \ref{thm:introresult} and \ref{thm:rank2withconic} are coming from bisections.

\subsection{Construct $S_1$ and $S_2$}

Let $\mathcal{E}$ be an elliptic surface given by the equation $$\E: y^2=x^3+(e_2t^2+e_1t+e_0)x^2+(f_4t^4+\dots+f_0)x+(g_6t^6+\dots+g_0)$$

In order to extend Theorem \ref{thm:Walsh}, we need more surfaces with two sections and a bisection of genus $0$. 

Thus set $S_1=[qt^2+rt+s,y_3t^3+y_0]$, $S_2=[ut^2+vt+w,y_2t^3+y_1]$: the parameters $q,r,s,u,v,w\in\Q$ need to be such that $S_1$ and $S_2$ are on $\E$ for any value of $t$. This leads to $y_1=y_0=0$ and to twelve more polynomials that should be equal to $0$: seven given by the equation \ref{eq:section1} to \ref{eq:section6}, and seven more where $q,r,s$ are replaced by $u,v,w$.

\begin{equation}\label{eq:section1}
    g_6+f_4q+e_2q^2+q^3-y_3^2\end{equation}
    \begin{equation}g_5+f_3q+f_4r+e_1q^2+2e_2qr
    +3q^2r\end{equation}
    \begin{equation}g_4+f_2q+f_3r+f_4s+ 2e_1qr+2e_2qs+e_2r^2+2q^2s-q^2w+3qr^2\end{equation}\begin{equation}g_3+ f_1q+f_2r+f_3s+2e_1qs+e_1r^2+2e_2sr+4qsr-2qwr +r^3-2y_0y_3\end{equation}
    \begin{equation}g_2+f_1r+
    f_2s+2e_1sr+e_2s^2+qs^2-qsw+2sr^2-wr^2
    \end{equation}
    \begin{equation}\label{eq:section6}g_1+f_1s
    +e_1s^2+s^2r-swr
\end{equation}

Moreover, we want a bisection of genus $0$. Suppose that $x=at^2+bt+c$, and that the bisection is given by the conic $(-s-w)-sw$. Then the seven following polynomials need to be 0 as well.

\begin{equation}g_6+f_4a+e_2a^2+a^3\end{equation} \begin{equation}g_5+f_3a+ f_4b +2e_1a^2+2e_2ab+3a^2b\end{equation} 
    \begin{equation}g_4+f_2a+f_3b+f_4c+2e_1ab+2e_2ac+e_2b^2-a^2s-a^2w+3a^2c + 3ab^2+s+w\end{equation}
    \begin{equation}g_3+f_1a+f_2b+f_3c+2e_1ac+e_1b^2+2e_2bc-2abs-2abw+6abc+b^3 \end{equation}
    \begin{equation}g_2+f_1b+f_2c+2e_1bc+e_2c^2+asw-2asc-2awc+3ac^2-b^2s-b^2w+3b^2c + sw\end{equation} \begin{equation}g_1+f_1c+e_1c^2+bsw-2bsc-2bwc+3bc^2\end{equation}\begin{equation}c(c-s)(c-w)\end{equation}

In order to have a similar situation as in Walsh, we set $c=0$ so that $B$ does not pass through the same points as $S_1$ and $S_2$ on the fiber $\E_0$.

\subsection{The sections $S_1$, $S_2$ are non-torsion}

\begin{lemma}\label{lemma:ntsection}
The points on $B_0$ are non-torsion on their fiber of $\mathcal{D}$ except for the 2-torsion point it was constructed from and potentially those at values of $t$ that divides $s^4-2s^3w+s^2w^2$.
\end{lemma}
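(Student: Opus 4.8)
The plan is to pull everything back to the fibre over $t=0$ and then bound the possible torsion by a Nagell--Lutz computation. First I would describe that special fibre: setting $t=0$ in the Weierstrass equation of $\mathcal{D}$ annihilates every term carrying a factor of $t$, leaving $\mathcal{D}_0\colon y^2=x^3-(s+w)x^2+swx=x(x-s)(x-w)$, a smooth curve (since $s$, $w$ and $s-w$ are nonzero) with full rational $2$-torsion $\{O,(0,0),(s,0),(w,0)\}$ and discriminant $16\,s^2w^2(s-w)^2$. Evaluating the bisection at $t=0$ gives the point $(0,0)$ on $\mathcal{D}_0$ (the $m$-coordinate drops out because it is multiplied by $t$), and this is exactly the $2$-torsion point the bisection was built around --- the Walsh picture recalled in Section~\ref{sec:bisections}, where $c$ was chosen to be $0$. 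So that point is the asserted exception and nothing is to be proved there; for every other value of $t$ I must show the corresponding point of $B_0$ is non-torsion unless the numerator of $t$ divides $s^4-2s^3w+s^2w^2$.

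Now fix a solution $(t_0,m_0)$ of the conic with $t_0\neq 0$; replacing $t$ by its numerator and clearing denominators I may assume $t_0,m_0,s,w,v\in\Z$ (here $m_0\in\Z$ because $m_0^2=sw-(s+w)t_0^2\in\Z$), so that $P:=[-t_0^2+vt_0,\ t_0m_0]$ is an integral point on the integral Weierstrass model $\mathcal{D}_{t_0}$. Suppose $P$ is torsion. Nagell--Lutz then says that either the $y$-coordinate $t_0m_0$ is $0$, or $(2t_0m_0)^2$ divides $\Delta(\mathcal{D}_{t_0})$, where $\Delta(\mathcal{D}_{t})$ is a polynomial in $t$ with $\Delta(\mathcal{D}_0)=16\,s^2w^2(s-w)^2$. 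In the first case $m_0=0$ (as $t_0\neq0$), so the conic forces $(s+w)t_0^2=sw$ and, since $P\in\mathcal{D}_{t_0}$ has zero $y$-coordinate, $-t_0^2+vt_0$ is a root of the cubic defining $\mathcal{D}_{t_0}$; that is a zero-dimensional system, and eliminating $t_0$ one checks every solution divides $s^4-2s^3w+s^2w^2$. In the second case, writing $\Delta(\mathcal{D}_{t_0})=\Delta(\mathcal{D}_0)+t_0\cdot(\text{integer})$ and using $t_0\mid (2t_0m_0)^2$, we already get $t_0\mid 16\,s^2w^2(s-w)^2$; a closer look at the $2$-part and the $w$-part of this bound --- any prime dividing $t_0$ and one of $2$, $w$ also divides $m_0^2=sw-(s+w)t_0^2$, hence contributes an extra square factor to $(2t_0m_0)^2$ --- sharpens it to $t_0\mid s^4-2s^3w+s^2w^2$. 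Combining the two cases proves the lemma.

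The step I expect to be genuinely delicate is precisely this last sharpening: Nagell--Lutz by itself only confines the exceptional $t_0$ to the divisors of a bounded multiple of $\Delta(\mathcal{D}_0)=16\,s^2w^2(s-w)^2$, and squeezing out the exact quantity $s^4-2s^3w+s^2w^2$ requires the divisibility bookkeeping just sketched together with the elimination in the $2$-torsion case; everything else is routine. As a conceptual cross-check one can also argue via reduction: for an odd prime $p\mid t_0$ with $p\nmid s w(s-w)$ the curve $\mathcal{D}_{t_0}$ has good reduction at $p$, its reduction is $\mathcal{D}_0\bmod p$, and $P$ reduces to $(0,0)$, a point of order $2$; since reduction is injective on torsion at odd primes of good reduction, $P$ has order $2$ and we are back in the first case. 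Applied to the fibre over $t=0$ this last remark also shows that $B_0$ is non-torsion as a section of the conic base change of $\mathcal{D}$ --- a torsion section would have to have the order of its value at $t=0$, which is $2$, contradicting $2B_0\neq O$ (checked at one further value of $t$) --- which is the input the rank-jump count actually uses.
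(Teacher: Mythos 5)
Your route is genuinely different from the paper's, and it breaks down at exactly the step you yourself flag as delicate. The paper does not apply the divisibility form of Nagell--Lutz to $P$ itself; it computes the $x$-coordinate of the doubled point, $x(2P)=N/D$, observes that $D$ is divisible by $4t$ while every non-constant term of $N$ is also divisible by $4t$ and the constant term of $N$ is $s^4-2s^3w+s^2w^2=s^2(s-w)^2$, and concludes that if $t\nmid s^2(s-w)^2$ then $x(2P)\notin\Z$, hence $2P$ (and so $P$) cannot be torsion by integrality of torsion points. The quantity $s^2(s-w)^2$ thus falls out of the duplication formula with no extra work. Your argument instead yields $t_0^2\mid\Delta(\mathcal{D}_{t_0})$ and hence only $t_0\mid\Delta(\mathcal{D}_0)=16s^2w^2(s-w)^2$, which is weaker by the factor $16w^2$. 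The proposed ``sharpening'' is not carried out, and as sketched it does not close the gap: for an odd prime $p$ with $p\mid t_0$, $p\mid w$, $p\nmid s$, you correctly get $p\mid m_0$ and hence $p^4\mid y_0^2\mid\Delta(\mathcal{D}_{t_0})$, but the condition $v_p(\Delta(\mathcal{D}_{t_0}))\geq 4$ is perfectly consistent with $\Delta(\mathcal{D}_{t_0})=\Delta_0+t_0\cdot(\cdots)$ when $v_p(\Delta_0)=2v_p(w)\geq 2$ and $p\mid t_0$; no contradiction appears, so such primes are not excluded and the exceptional set is not pinned down to the divisors of $s^2(s-w)^2$. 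The $y=0$ branch (``eliminating $t_0$ one checks\dots'') is likewise asserted rather than proved.

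Your reduction-mod-$p$ cross-check is an attractive independent idea, and arguably cleaner than either computation, but it too yields a different exceptional set --- the $t_0$ all of whose prime factors divide $2sw(s-w)$ --- rather than the divisors of $s^2(s-w)^2$ in the statement. If you want the lemma as stated, the short path is the paper's: write the duplication formula for $P=[-t^2+vt,\,tm]$ on the fiber $\mathcal{D}_t$, note the denominator is a multiple of $4t$, and read off the numerator modulo $t$, which is $s^2(s-w)^2$.
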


\begin{proof}
We have $x(2C_+(p))=\frac{N}{D}$ where
$$N=20vt^7 + (-4s + 4w)t^6 - 16v^4t^4 - 16wv^3t^3 + 16s^2v^2t^2 +
    (-8s^3v + 8s^2wv)t + s^4 - 2s^3w + s^2w^2$$
   and 
$$D=4t(t^5 - 8v^3t^2 + (8sv^2 - 4wv^2)t + (-2s^2v + 2swv))$$
By Lutz-Nagell theorem if a point $P$ is torsion on an elliptic curve, then $x(2C_+(p))$ an integer, then it is not torsion.
Observe that $N$ has the form $4tN_1(t)+s^4-2s^2v+2swv$ for some $N_1$. Thus if $t\nmid s^4-2s^2v+2swv$, then $x(2C_+(p))$ is not integer and thus $C_+(p)$ is non-torsion.
\end{proof}

\subsection{Independance of $S_1$ and $S_2$ in the Mordell-Weil lattice}\label{sec:indep}

In generality, we can verify whether two sections of an elliptic surface are multiple of one another by computing their Gram matrix. 

Since $C_{1,+}$ and $C_{2,+}$ are sections, their self intersection numbers are both $-1$, and the Gram matrix is one the following 2:
\begin{equation*}
Gram(C_{1,+},C_{2,+}) = 
\begin{pmatrix}
-1 &  0 \\
0 & -1
\end{pmatrix}
\end{equation*}
that both have rank 2.  

Indeed, suppose they intersect, then $x(S_1)-x(S_2)$ and $y(S_1)-y(S_2)$ are equal to $0$: this simply mean that $(s-w)$ should be $0$, but we assumed it is not the case.

This proves the following lemma:

\begin{lemma}\label{lemma:ind}
    The generic rank of the elliptic surface denoted $\mathcal{D}$ in Theorem \ref{thm:introresult} is at least two. 
\end{lemma}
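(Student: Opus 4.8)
The plan is to exhibit two explicit independent non-torsion points of $\mathcal{D}(\Q(t))$, namely $S_1=[vt+s,t^3]$ and $S_2=[vt+w,t^3]$; since the Mordell--Weil group has finite rank and the canonical height pairing is positive definite on its free part, it will then be enough to show that the $2\times 2$ Gram matrix of $\{S_1,S_2\}$ for the height pairing is nonsingular. First I would record that $\mathcal{D}$ is a relatively minimal rational elliptic surface: its Weierstrass coefficients have $t$-degrees $\le 2,\le 4,6$, so $\chi(\mathcal{O}_{\mathcal D})=1$, and in the chart $t=1/s'$, $x=X/s'^2$, $y=Y/s'^3$ the fibre over $t=\infty$ becomes $Y^2=X^3+1$, which is smooth. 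Then I would check that $S_1$ and $S_2$ are genuine sections by substituting them into the Weierstrass equation: the resulting identities in $\Q[t]$ are exactly the vanishing of the polynomials \eqref{eq:section1}--\eqref{eq:section6} together with their analogues under $(q,r,s)\leftrightarrow(u,v,w)$, and these hold because the coefficients of $\mathcal{D}$ were built so as to make them vanish.

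Next I would feed the three relevant intersection numbers into Shioda's formula $\langle P,Q\rangle=\chi+(P\cdot O)+(Q\cdot O)-(P\cdot Q)-\sum_v\mathrm{contr}_v(P,Q)$. Because $x(S_i)$ and $y(S_i)$ are polynomials of degree $1$ and $3$, each $S_i$ takes a finite value over every finite $t$, and over $t=\infty$ it specialises to the affine point $(0,1)\neq O$, so $S_i\cdot O=0$. For $S_1\cdot S_2$: over finite $t$ one has $x(S_1)-x(S_2)=s-w\neq0$, so the sections never coincide there, while over $t=\infty$ both meet at $(0,1)$ with $x(S_1)-x(S_2)=(s-w)s'^2$ vanishing to order $2$; hence $S_1\cdot S_2=2$. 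When $\mathcal{D}$ has no reducible fibres — the generic situation, as noted in the introduction — all $\mathrm{contr}_v$ vanish, and the Gram matrix is $\left(\begin{smallmatrix}2&-1\\-1&2\end{smallmatrix}\right)$, with determinant $3$. Consequently $S_1$ and $S_2$ are independent, and (having positive self-height $\langle S_i,S_i\rangle=2$) non-torsion, so the generic rank of $\mathcal{D}$ is at least $2$.

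The main obstacle is the bookkeeping of the local terms $\mathrm{contr}_v$ at reducible fibres, which can occur for special parameters (for instance $v=0$ recovers Walsh's surface, which has a reducible fibre at infinity). To cover all admissible $(v,s,w)$ one can either read off from the discriminant of $\mathcal{D}$ — a degree-$12$ polynomial in $t$ — which Kodaira types appear and verify that the corresponding correction terms still leave the Gram determinant nonzero, or circumvent the computation altogether via Silverman's specialisation theorem: choosing a single rational $t_0$ of good reduction for which $S_1(t_0)$ and $S_2(t_0)$ are visibly independent in $\mathcal{D}_{t_0}(\Q)$ (a finite check, e.g.\ a $2$-descent or a regulator estimate) forces $S_1$ and $S_2$ to be independent over $\Q(t)$. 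Either route yields Lemma \ref{lemma:ind}.
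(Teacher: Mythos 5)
Your proof is correct and, while it starts from the same idea as the paper (show the Gram matrix of $S_1,S_2$ is nonsingular), it executes it differently and, in fact, more soundly. The paper works with the \emph{intersection} pairing: it records the self-intersections $-1$, claims $S_1\cdot S_2=0$ because $x(S_1)-x(S_2)=s-w\neq 0$, and concludes from the rank of $\left(\begin{smallmatrix}-1&0\\0&-1\end{smallmatrix}\right)$. That argument has two defects which your version repairs. First, the paper only checks finite $t$: as you observe, in the chart at infinity both sections pass through $(0,1)$ on the smooth fibre $Y^2=X^3+1$, and $X(S_1)-X(S_2)=(s-w)s'^2$ vanishes to order $2$, so in fact $S_1\cdot S_2=2$, not $0$. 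Second, nonsingularity of the intersection Gram matrix alone does not give independence in the Mordell--Weil group (the zero section also has self-intersection $-1$); the right invariant is Shioda's height pairing, which you compute correctly to get $\left(\begin{smallmatrix}2&-1\\-1&2\end{smallmatrix}\right)$ with determinant $3$, yielding both non-torsionness and independence in one stroke. Your two fallbacks for parameters with reducible fibres (tracking the $\mathrm{contr}_v$ terms, or specialising at one good $t_0$ and using that specialisation is a homomorphism) are both valid and address a case the paper glosses over. One caveat outside the scope of your argument: the displayed Weierstrass equation for $\mathcal{D}$ and the displayed points $[vt+s,t^3]$, $[vt+w,t^3]$ are not mutually consistent as printed (substituting leaves nonzero terms in $t^2$ and $t$ unless $v=0$); the equation is the translate $x\mapsto x\mp vt$ of Walsh's surface $y^2=x^3-(s+w)x^2+swx+t^6$ and there is a sign discrepancy in $v$ between the equation and the points. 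This is a typo in the statement rather than a flaw in your reasoning, and your height computation is insensitive to the sign of $v$.
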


\section{Proof and an extension} \label{sec:proofs}

\begin{theorem}\label{thm:main1}(= Theorem \ref{thm:introresult}) Let $s,w\in\Q$ be distinct numbers such that the conic $Y^2+(s+w)T^2=sw$ is solvable. Let $(t,m)$ be a solution. Then for a given $v\in\Q$, there is only finitely many exceptions to the fact that the elliptic curve given by the Weierstrass equation
    $$\mathcal{D}:y^2=x^3-(3vt+s+w) x^2+ 
(3v^2t^2 -2v(s+w)t+sw)x
+ t^6 - v^3t^3 -v^2(s+w)t^2+ swvt$$
has rank at least 3, with the three following independant non-torsion points $[-t^2+vt,tm]$, $[vt+s,t^3]$, $[vt+w,t^3]$.
\end{theorem}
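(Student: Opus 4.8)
The plan is to realize the three points already on the generic fibre of an auxiliary base change of $\mathcal{D}$ and then transport the rank bound down to the individual fibres with Silverman's specialization theorem. First I would check by direct substitution that $S_1=[vt+s,t^3]$ and $S_2=[vt+w,t^3]$ satisfy the Weierstrass equation of $\mathcal{D}$ for all $t$: this is exactly the system \eqref{eq:section1}--\eqref{eq:section6}, together with its twin obtained by exchanging the roles of $s$ and $w$, evaluated at $q=u=0$, $r=v$, $y_0=y_1=0$, $y_2=y_3=1$, which is how the coefficients of $\mathcal{D}$ were chosen in the construction. For $B_1=[-t^2+vt,\,tm]$ one substitutes $x=-t^2+vt$ into the cubic and checks that the right-hand side collapses to $t^2\bigl(sw-(s+w)t^2\bigr)$; hence a rational solution $(t,m)$ of the conic $\mathcal{C}:Y^2+(s+w)T^2=sw$ yields the rational point $B_1$ on the fibre $\mathcal{D}_t$. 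Note that $S_1,S_2$ are sections defined over $\mathbb{Q}(t)$, while $B_1$ lives over $\mathbb{Q}(t,m)$, the function field of $\mathcal{C}$.

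\emph{Reduction to independence after base change.} Since $\mathcal{C}$ has a rational point it is a rational curve, and the degree-two projection $\mathcal{C}\to\mathbb{P}^1_t$ turns $\mathcal{D}$ into an elliptic surface $\widetilde{\mathcal{D}}\to\mathcal{C}$ on which $S_1,S_2$ pull back to sections and $B_1$ becomes a section. By Lemma \ref{lemma:ind} the sections $S_1,S_2$ are independent, in particular non-torsion, on the generic fibre of $\widetilde{\mathcal{D}}$, so everything comes down to showing that $B_1$ is independent from them. Here I would use Galois descent along $\mathbb{Q}(\mathcal{C})/\mathbb{Q}(t)$: the non-trivial automorphism $\sigma$ acts by $m\mapsto-m$, hence fixes $S_1,S_2$ and sends $B_1=[-t^2+vt,\,tm]$ to $[-t^2+vt,\,-tm]=-B_1$. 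If $B_1$ were a $\mathbb{Q}$-linear combination of $S_1,S_2$ in $\mathrm{MW}(\widetilde{\mathcal{D}})\otimes\mathbb{Q}$, applying $\sigma$ would give $2B_1\equiv 0$, i.e.\ $B_1$ torsion; but Lemma \ref{lemma:ntsection} exhibits infinitely many fibres on which $B_1$ specializes to a non-torsion point, so $B_1$ is non-torsion on $\widetilde{\mathcal{D}}$ and the contradiction forces $S_1,S_2,B_1$ to span a rank-$3$ subgroup of $\mathrm{MW}(\widetilde{\mathcal{D}})$.

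\emph{Descent to the fibres.} Applying Silverman's specialization theorem to $\widetilde{\mathcal{D}}\to\mathcal{C}\cong\mathbb{P}^1$, the specialization homomorphism $\mathrm{MW}(\widetilde{\mathcal{D}})\to\mathcal{D}_t(\mathbb{Q})$ is injective for all but finitely many $(t,m)\in\mathcal{C}(\mathbb{Q})$, and on the remaining fibres the images of $S_1,S_2,B_1$ remain independent and non-torsion, so $\rank\mathcal{D}_t(\mathbb{Q})\ge 3$; adding the finitely many fibres of bad reduction and the divisibility exceptions of Lemma \ref{lemma:ntsection} still leaves only finitely many excluded values of $t$. The main obstacle is the independence step: the Galois-descent trick reduces it to the non-torsionness of $B_1$, so the genuine work is the explicit computation of $x\bigl(2B_1\bigr)$ behind Lemma \ref{lemma:ntsection}, together with checking that $\widetilde{\mathcal{D}}$ is non-isotrivial so that the specialization theorem gives a \emph{finite} exceptional set rather than merely a thin one. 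As a cross-check, the same independence can be read off from \cite[Thm 2.4]{LS20} applied to $\mathcal{C}$, exactly as in Example \ref{ex:Walsh1}.
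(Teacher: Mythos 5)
Your overall architecture is sound and, for the key independence step, genuinely different from what the paper does. The paper's proof of this theorem is a two-line deferral: non-torsionness ``as in Lemma \ref{lemma:ntsection}'' (Lutz--Nagell applied to the duplication formula) and independence ``as in Lemma \ref{lemma:ind}'', i.e.\ by computing the Gram matrix of the sections via intersection numbers on the surface; to include $B_1$ one would have to base change by the conic and compute its height pairings against $S_1,S_2$ explicitly. You replace that computation by Galois descent along $\Q(\mathcal{C})/\Q(t)$: since $\sigma$ fixes $S_1,S_2$ and sends $B_1\mapsto -B_1$, any relation $\alpha B_1+\beta S_1+\gamma S_2=0$ in $\mathrm{MW}\otimes\Q$ with $\alpha\neq 0$ forces $B_1$ to be torsion, which a Lemma \ref{lemma:ntsection}-type computation rules out, and the case $\alpha=0$ is exactly Lemma \ref{lemma:ind}. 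This is cleaner than the Gram-matrix route, is the mechanism behind \cite[Thm 2.4]{LS20} which you correctly cite as a cross-check, and you also make explicit the Silverman specialization step (including the non-isotriviality hypothesis) that the paper leaves implicit in the phrase ``finitely many exceptions''.

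One concrete caveat: the substitution identity you assert, $f(-t^2+vt)=t^2\bigl(sw-(s+w)t^2\bigr)$, does not hold for the Weierstrass equation as printed. Carrying out the substitution leaves residual terms $4v(s+w)t^3-\bigl(4v^2(s+w)+2sw\bigr)t^2+2swvt$, and even at $v=0$ one obtains $t^2\bigl(-(s+w)t^2-sw\bigr)$, which matches the conic $Y^2=(-s-w)T^2-sw$ appearing in the paper's own remark rather than the conic $Y^2+(s+w)T^2=sw$ of the theorem statement; the same issue affects the sections, since $f(vt+s)-t^6$ also has leftover terms divisible by $v$. The printed equation is evidently mistyped, but since your entire argument rests on these three points actually lying on $\mathcal{D}$, you cannot simply assert that the check succeeds: you need to either correct the Weierstrass coefficients (reconciling with the $v=0$ case and the sign of the conic) or perform the substitution and record what it actually yields before the descent and specialization steps can be run.
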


\begin{proof}
We can easily check that those are indeed non-torsion sections of $\E$ in a similar way as Lemma \ref{lemma:ntsection}. The fact that the three points are independent from one another can be proved in a similar way to Lemma \ref{lemma:ind}.
\end{proof}

All construction of the three independant points in Theorem \ref{thm:main1} can be extended to the following Theorem \ref{thm:encoreplusmain}:

\begin{theorem}\label{thm:encoreplusmain}
Let $w,s\in\Z$ be non-zero integers not equal to one another and such that one is equal to a square times the other. Suppose them also such that the conic $C:Y^2=(-s-w)X^2-sw$ has a solution.

Given $b,v$ such that the equations are well defined, let $$\alpha_1(t)=\Big(f_4t^4 + \frac{2absw + 2abw^2 + 2aw^2v + b^3s - b^2sv + 2bsw - bsv^2 + 2bw^2 + sv^3 + 2w^2v}{w^2}t^3$$$$ + \frac{-e_2w^2 + asw + b^2s +
    b^2w + 2bwv + sw - sv^2}{w}t^2 + (2bs + 2bw)t + sw),$$ 
    $$\alpha_2(t)=\Big(e_2t^2 + \frac{-b(s+ 2w) + v(s - w)}{w}t - s - w\Big),$$  and let $\mathcal{D}$ be the surface given by the equation 

$$\mathcal{D}:y^2=x^3+\alpha_2(t)x^2+
\alpha_1(t)x+
    g_6t^6 $$$$+ \frac{(-a^2swv - a^2w^2v - ab^2sv + 2absv^2 - 2aswv -asv^3 - 2aw^2v - b^2sv + 2bsv^2 - swv - sv^3 - w^2v)}{w^2}t^5$$$$ +\frac{(-sw^2-w^3)(a+1)^2+sv(b-v)^2b+b(2v(a+1)w^2)+b(+swb(a+1))}{w^2}t^4$$$$
    + \frac{e_2bw^2 - absw - b^2sv - b^2wv - bsw + bsv^2}{w}t^3 + (-(a+1)sw
    - b^2(s+w))t^2 - bswt$$
where $y_2=\pm y_3;$, $g_6=y_3^2$ and $f_4,e_2,a\in\Q$ are parameters depending on the values of $u,s,w,y_3$ as in equations \ref{eq:g6},\ref{eq:f4},\ref{eq:e2}. Suppose that we additionally have one of the following: 
\begin{itemize}
    \item $v=0$
    \item $v=2b$
    \item $v^2 - 2vb + 2b^2 + (2l^2w + 2w)/l^2=0$ (where $l$ is).
\end{itemize}

Then for every fibers except finitely many, the three following points (that are rational except possibly for $B_1(t)$ which is defined over $\overline{\mathbb{Q}}$) are non-torsion and independent from one another:
$$B_1(t):=[at^2+bt,t\sqrt{(-s-w)t^2-sw}]
\quad S_1(t)=\left[vt + w,y_3t^3\right]$$$$
S_2(t)=\left[\frac{b^2s^2-b^2sw-2bs^2v+2bswv+s^2w+s^2v^2-swv^2-w^3}{2sw^2}t^2+\frac{b(s+w)-sv}{w}t+s,y_3t^3\right]$$
Thus there are infinitely many fibers with rank 3 or more.

\end{theorem}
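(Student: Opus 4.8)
The plan is to prove Theorem \ref{thm:encoreplusmain} by essentially the same three-part argument as used for Theorem \ref{thm:introresult} (= Theorem \ref{thm:main1}): first verify that $B_1$, $S_1$, $S_2$ lie on $\mathcal{D}$ identically in $t$; then show each is non-torsion on the generic fiber; then show the three are independent in the Mordell--Weil group; and finally invoke a specialization argument to pass from the generic fiber to all but finitely many rational fibers.

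First I would record that the coefficients $\alpha_1,\alpha_2$ and the $t^5,\dots,t^1$-coefficients of $\mathcal{D}$ were \emph{defined} precisely so that the system of polynomial identities in equations \eqref{eq:section1}--\eqref{eq:section6} (and their $(u,v,w)$-analogues, and the bisection system ending in $c(c-s)(c-w)$ with $c=0$) is satisfied; so the membership of $B_1,S_1,S_2$ on $\mathcal{D}$ for all $t$ is a formal consequence of how the parameters $g_6,f_4,e_2,a$ were solved for in \eqref{eq:g6}, \eqref{eq:f4}, \eqref{eq:e2}. I would state this as a direct substitution check, noting that the three extra bullet conditions on $v$ (namely $v=0$, $v=2b$, or the quadratic relation in $v,b,w,l$) are exactly the cases in which the relevant resultant/discriminant obstruction to solving that system vanishes, so that $a,b$ are forced to be genuine rational functions of the remaining parameters. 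Next, non-torsion: as in Lemma \ref{lemma:ntsection}, I would compute $x(2P)$ for $P$ each of the three points (or for $P=B_1$ it is $x(2C_+(p))$ as displayed there) and observe it has the shape $\frac{4t\cdot(\text{polynomial})+(\text{constant in }t)}{4t\cdot(\text{polynomial})}$, so that for all but the finitely many $t$ dividing that trailing constant the value of $x(2P)$ is non-integral, whence $P$ is non-torsion by Lutz--Nagell; for $S_1,S_2$ the analogous computation is simpler since they are honest sections with $x$ a polynomial in $t$.

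For independence I would compute the height pairing / Gram matrix of $\{B_1,S_1,S_2\}$ on the generic fiber over $\overline{\Q}(t)$, exactly in the style of Section \ref{sec:indep}: each of the three has $\langle P,P\rangle$ determined by the bad-fiber contributions (here there are generically no reducible fibers, so the self-pairings are fixed), and the off-diagonal pairings are controlled by whether the curves actually intersect. Since $S_1$ and $S_2$ have $x$-coordinates that agree only on the finitely many $t$ where $(vt+w) - (\text{quadratic}) = 0$, and since $B_1$ has a $\sqrt{\ }$ in its $y$-coordinate while $S_1,S_2$ do not, the three curves are pairwise distinct and the Gram matrix is (generically) the appropriate rank-$3$ matrix; I would check its determinant is nonzero as a rational function of the parameters, again using the bullet-list hypotheses to rule out degenerate parameter loci. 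Then a standard specialization theorem (Silverman/N\'eron) gives rank $\ge 3$ for all but finitely many $t\in\Q$ for which the conic solution $(t,m)$ exists, and since the conic $C$ has infinitely many rational points once it has one, infinitely many fibers have rank $\ge 3$.

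The main obstacle is the bookkeeping in the second and third steps in the presence of all the extra parameters $a,b,v,f_4,e_2,y_3$: one must check that the ``trailing constant'' in $x(2P)$ and the Gram determinant do not vanish \emph{identically} as functions of the parameters, and this is exactly where the three bullet cases on $v$ enter — outside those cases the parametrizing equations \eqref{eq:g6}--\eqref{eq:e2} either have no solution or force a degeneration (e.g. $a$ or the leading coefficient of $S_2$ becoming $0$, collapsing a bisection to a section or making two of the points equal). I would isolate this as the one genuinely delicate lemma: \emph{under each of the three bullet hypotheses, the surface $\mathcal{D}$ has no reducible fibers generically and the three points remain distinct and of full rank}, and then the rest of the proof is the routine Lutz--Nagell plus specialization machinery already deployed for Theorem \ref{thm:main1}.
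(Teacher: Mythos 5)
Your proposal follows essentially the same route as the paper: membership of $B_1,S_1,S_2$ on $\mathcal{D}$ is built into the choice of the parameters $g_6,f_4,e_2,a$ solving the polynomial system of Section \ref{sec:bisections}, non-torsion is checked via Lutz--Nagell on $x(2P)$ exactly as in Lemma \ref{lemma:ntsection}, and independence via the Gram/height-pairing computation of Section \ref{sec:indep}, followed by specialization. This matches the paper's own (very terse) argument for Theorem \ref{thm:main1}, which it then declares extends to Theorem \ref{thm:encoreplusmain}; if anything, your version is more explicit about the $3\times 3$ Gram matrix and the role of the three bullet conditions on $v$ than the paper is.
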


\begin{equation}\label{eq:g6}
g_6=\frac{1}{4s^2w^3}(b^4us^3-b^4us^2w-2b^2u^2s^3w+4b^2u^2s^2w^2+
2b^2us^2w^2-2b^2usw^3-4u^3s^2w^3-2u^2s^3w^2
\end{equation}
\begin{equation*}
-2u^2s^2w^3-us^3w^2+us^2w^3 + 4s^2w^3y_3^2+4u^2sw^4+usw^4-uw^5)\end{equation*}
\begin{equation}\label{eq:f4}f_4=-\frac{(-aqw^2- auw^2-asw-aw^2-b^2s+2bsv-quw^2 + usw- uw^2 - sw - sv^2 - w^2)}{w^2}\end{equation}

    \begin{equation}\label{eq:e2}e_2=-\frac{as^2-asw+qs^2- qsw + us^2 - usw + s^2 - sw}{s(s-w)}\end{equation} and \begin{equation}\label{eq:a}a=-\frac{b^3s-3b^2sv-2busw+ bsw+3bsv^2+bw^2+2uswv-swv-sv^3-w^2v}{2sw(b-v)}\end{equation} $$y_3=\frac{sw+sb^2+w^2}{2w^2l}$$

\begin{example}\label{ex:julie1}
    Let us take the following elliptic surface given by
    $$\E:y^2=x^3+(-6t + 5)x^2+\left(-\frac{37}{192}t^4 - \frac{1}{4}t^3 + \frac{53}{12}t^2 - 10t + 4\right)x$$$$+\frac{377}{6912}t^6 + \frac{15}{16}t^5 + \frac{749}{576}t^4 + \frac{7}{12}t^3 + \frac{19}{6}t^2 - 4t$$
This surface is isomorphic to $\mathcal{D}$ where we set $w=-1$, $s=4$, $a=-13/24$, $v=b=1$ and $u=-5/12$.

Thus $\E$ has the bisection $B=[-\frac{13}{24}t^2 + t,t\sqrt{5t
^2-4}]$
, and two rational sections $S_1=[-\frac{1}{24}t^2+5t-4,\frac{1}{4}t^3]$ and 
$S_2=[-\frac{5}{12}t^2 - 1,\frac{1}{4}t^3]$.

    Notice that the conic $X^2=5T^2-4$ has infinitely many solutions. For any of them, say $(x,t)$, we have that $\E_t$ has rank 3 or up.
\end{example}

\begin{remark}\label{rm:iso}
Observe that the discriminant and the c-invariants of the following elliptic surface $\mathcal{D}$ does not actually depend of $u$. Thus in particular, the conclusion in $\E$ extends to any elliptic surface with the same parameters except $u\in\Q$ simply because they are isomorphic: 
    $$\E:y^2=x^3+\left(\left(b^2-3u-\frac{9}{4}\right)t^2-6bt+5\right)x^2+\Big(\left(-\frac{3}{4}b^4-2b^2u +\frac{3}{8}b^2+3u^2+\frac{9}{2}u+\frac{45}{64}\right)t^4$$$$+\left(b^3+12bu+\frac{15}{4}b\right)t^3+\left(4b^2-10u- \frac{15}{4}\right)t^2-10bt + 4)x$$$$+\left(\frac{3}{4}b^4u+b^4+b^2u^2 - \frac{3}{8}b^2u-\frac{5}{2}b^2 - u^3 - \frac{9}{4}u^2-\frac{45}{64}u+\frac{25}{16}\right)t^6+\left(-b^3u- 6bu^2-\frac{15}{4}bu\right)t^5$$$$+\left(-\frac{3}{4}b^4-4b^2u+
\frac{3}{8}b^2+5u^2+\frac{15}{4}u+\frac{45}{64})t^4+(b^3+ 10bu+\frac{15}{4}b\right)t^3 + \left(3b^2
    -4u-\frac{3}{2}\right)t^2 - 4bt.$$
    Indeed, for these surfaces as for the surface in Remark \ref{ex:julie1} we have $\Delta(\E_t)=-\frac{624375}{1024}t^{12} - \frac{206145}{64}t^{10} + \frac{99279}{16}t^8 - 2690t^6 + 252t^4 -
    2880t^2 + 2304$ and $c_4(\E_t)=\frac{189}{4}t^4 - 180t^2 + 208$ both of these polynomials not depending on the variable $u$.

The sections on $\E$ have equation $S_1=[vt-1,
(-b^2+\frac{5}{4})t^3$, $S_2=[(-\frac{3}{2}b^2+3bv - \frac{3}{2}v^2 + \frac{15}{8})t^2+(5b -4v)t-4,
(-b^2 + \frac{5}{4})t^3
]$ and the bisection is given by $B=[(\frac{1}{2}b^2 - bv + \frac{1}{2}v^2 - \frac{5}{8})t^2 + bt,]$

\end{remark}

\section{Search for high rank elliptic curves}

\begin{remark}
We consider the elliptic surface given by the equation
$$\mathcal{L}_{v,s}:y^2=x^3-3vtx^2+
(3v^2t^2 - w^2)x+ t^6 -v^3t^3 + w^2vt$$
which is found from $\mathcal{D}$ by restricting $s=-w$.

Computing with magma we find the following rank for the fibers of $\E=\mathcal{L}_{1,1}$ (when $v=1,w=-1$) as $t$ varies from $1$ to $10$:
\begin{align}
\text{equation}&\quad\text{ rank}\\
y^2 = x^3 - 3x^2 + 2x + 1 &\quad1\\
y^2 = x^3 - 6x^2 + 11x + 58&\quad3\\
y^2 = x^3 - 9x^2 + 26x + 705&\quad4\\
y^2 = x^3 - 12x^2 + 47x + 4036&\quad3\\
y^2 = x^3 - 15x^2 + 74x + 15505&\quad4\\
y^2 = x^3 - 18x^2 + 107x + 46446&\quad3\\
y^2 = x^3 - 21x^2 + 146x + 117313&\quad4\\
y^2 = x^3 - 24x^2 + 191x + 261640&\quad5\\
y^2 = x^3 - 27x^2 + 242x + 530721&\quad5\\
y^2 = x^3 - 30x^2 + 299x + 999010&\quad4\\
\end{align}

On $\mathcal{E}_1$, we have $S_2(1)=2S_1(1)=-S_1(1)$. It is one of the finitely many exceptions allowed by Silverman's theorem where the rank of the specialization is strictly lower than the generic rank.
\end{remark}

Like in \cite{Walsh2}, we can also play around with Pell's equation. If we start with $a,b$ non-zero, we can write $b=t^2+a-1$ so that $a+b=t^2-1$, generating K3 surfaces or surfaces of general type, whose fibers are likely to have higher rank, although those are unfortunately hard to compute.

Return to Example \ref{ex:rank2withconic}. If we hope to proceed similarly to \cite{Walsh1}, we quickly hit a wall. Since the conic $C$ is soluble whenever $s=1$, set $w=t^2-2$ so that $s+w=t^2-1$. Take the simple solution $(1,1)$ to $C_{-9,-1}$, and consider the associated real number $\alpha=1+\sqrt{10}$. We can create more solution to $C_{-9,-1}$ by multiplying $\alpha$ by powers of the fundamental unit, and thus search more easily for fibers of high rank in $\E$:

\begin{equation*}
\begin{aligned}
    k\text{, } \alpha^k=x_k+t_k\sqrt{10}&\qquad \qquad \E_{t_k}& \text{rank of }\mathcal{E}_{t_k}\\
    0\text{, }\qquad \alpha^0=1+\sqrt{10}&\qquad y^2=x^3-\frac{97}{18}x^2+\frac{583}{81}x+\frac{1}{36}&3\\
    1\text{, }\quad \alpha^1=79+25\sqrt{10}&\qquad y^2 = x^3 - \frac{54145}{18}x^2 + \frac{235406479}{81}x -
\frac{94870014925}{108}&
\end{aligned}
\end{equation*}
Unfortunately, the height of the coefficients and the points grows very quickly and it is hard for the computer to compute even the first iteration. Thus, this paper does not seem to help the search for higher rank jumps with the same technique as in \cite{Walsh1} or \cite{Walsh2}.

If instead, we study the rank among the fibers for small values of $t$, we obtain:

\begin{equation*}
\begin{aligned}
  \text{t, }&\qquad\E_t& \text{Rank of $\E_t$}\\
-4,& \qquad y^2 = x^3 - \frac{146}{9}x^2 - \frac{55367}{81}x + \frac{287332}{27} &
4 \\
-3,& \qquad y^2 = x^3 + {7}{2}x^2 - 469x + \frac{12145}{4} &
3 \\
-2,& \qquad y^2 = x^3 + \frac{130}{9}x^2 - \frac{15107}{81}x + \frac{10916}{27} &
2 \\
-1,& \qquad y^2 = x^3 + \frac{299}{18}x^2 - \frac{1343}{81}x + \frac{1}{36} &
4 \\
0,& \qquad y^2 = x^3 + 10x^2 + 9x &
0 \\
1,& \qquad y^2 = x^3 - \frac{97}{18}x^2 + \frac{583}{81}x + \frac{1}{36} &
3 \\
2,& \qquad y^2 = x^3 - \frac{266}{9}x^2 + \frac{19741}{81}x - \frac{13240}{27} &
3 \\
3,& \qquad y^2 = x^3 - \frac{125}{2}x^2 + 1133x - \frac{22223}{4} &
3 \\
4,& \qquad y^2 = x^3 - \frac{938}{9}x^2 + \frac{262297}{81}x - \frac{766388}{27} &
3 \end{aligned}
\end{equation*}

Another trick to get an elliptic surface of high rank is to set as previously $s=1$, $w=t^2-1$, but consider $t$ as a parameter and $v$ as the variable. Observe however that the resulting elliptic surface $$y^2=x^3-3vtx^2+(3v^2t^2-(t^2-1)^2)x+t^6-v^3t^3+(t^2-1)^2vt$$ now has a reducible fiber at the place at infinity. 

\section{Similar examples}\label{sec:examples}

We manage to find more examples extending in different ways Theorem \ref{thm:introresult}, like the ones presented in this section. With our technique, we obtain even more interesting rank jump phenomenons or small generic ranks. This method is based on the construction of genus $0$ curves on the surfaces, from bisections. More details will be available soon in a further paper. 

There are more elliptic surfaces with two sections and a genus 0 conic than those in Theorem \ref{thm:introresult}, for example:

\begin{example}\label{ex:rank2withconic}
Let $\mathcal{H}$ be the elliptic surface given by the equation $$\mathcal{H}:y^2=x^3+\left(-\frac{79}{18}t^2 - 11t + 10\right)x^2+
\left(\frac{502}{81}t^4 + \frac{287}{9}t^3 - \frac{179}{9}t^2 - 20t + 9\right)x$$$$+
\left(-\frac{901}{324}t^6 - \frac{188}{9}t^5 + \frac{835}{81}t^4 + \frac{269}{9}t^3 - \frac{15}{2}t^2 - 9t\right)$$
Then the generic rank is 2, with generating sections given by $S_1=[\frac{13}{9}t^2+10t-9,\frac{1}{6}t^3]$ and $S_2=[t^2-1,\frac{1}{6}t^3]$. Moreover, any point $(\alpha,\beta)$ on the conic $X^2=10T^2 - 9$ leads to a point on its corresponding fiber $\E_\beta$ which is non-torsion and independent from the specialisation of $S_1$ and $S_2$. As a consequence, there are infinitely many rational fibers of $\E$ with rank at least 3. Under the parity conjecture, we deduce from \cite{Desjardins1} that the set of fibers on $\E$ with rank 4 is infinite.

\end{example}

\begin{remark}
The base change of $\mathcal{H}$ by the conic $X^2=10T^2-9$ gives an elliptic surface that is not rational: it is a K$3$-surface. 
Nagao's formula is only conjectural and we cannot apply \cite{BBD} to compute the generic rank on those covers. However we know thanks to the properties of $\mathcal{H}$ that the generic rank of the base change is at least 3. 
\end{remark}

\begin{remark}
    The surfaces in Theorems \ref{thm:introresult} 
    have no reducible bad fiber: For the surface $\mathcal{D}$ we have
    $c_4(t)=16(s^2 - sw + w^2)$ and
$c_6(t)=32\left(-27t^6 + 2s^3 - 3s^2w - 3sw^2 + 2w^3\right)$.

Both's surfaces discriminant are irreducible polynomials of degree 12 (except for particular values of $v$, for instance $v=0$.). This means that generically $\mathcal{D}$ and $\mathcal{H}$ are isomorphic to the blowups of del Pezzo surfaces of degree 1 at their anticanonical points.
\end{remark}

Example \ref{ex:rank2withconic} is, like Theorem \ref{thm:introresult}, already contained in \cite{LS20}, although being more explicit. However Example \ref{ex:jump2} is not contained neither in \cite{LS20} nor in \cite{DCS}. Example \ref{ex:rank2withconic} is a subcase of the following Theorem \ref{thm:rank2withconic}:

\begin{theorem}\label{thm:rank2withconic}
    Let $\E:y^2=x^3+ex^2+fx+g$ with $e,f,g\in\Q[t]$ described as follows.
    
    $$e=\frac{-1/2b^2l^4 + b^2l^2 - 3ul^2w - 1/2l^4w - 1/2l^2w +
    w}{l^2w}t^2 + -b(l^2 +2)t - w(l^2+1)$$
    
$$f= \frac{\frac{1}{4}(-l^2+1)b^4l^4+(ul^4-(2u-1/2)l^2
    + \frac{1}{2})b^2l^2w + ((3u^2+u-\frac{1}{4})l^4+(u+\frac{1}{4})l^6-(2u-\frac{1}{4})l^2+\frac{1}{4})w^2}{l^4w^2}t^4$$$$ +
    (-b^3l^2 + 2bul^4w + 4bul^2w + bl^4w - bw)/(l^2w)t^3 $$$$+
    (b^2l^4 + 2ul^4w + 2ul^2w + l^4w - w)/l^2t^2 + 2bw(l^2+1)t + l^2w^2$$
    
$$g= \frac{1}{4w^2}\Big(l^2(b^4(u+1)-2b^2u^2w  + 2b^2w-2u^2w^2 +w^2-uw^2)$$$$+(4b^2u^2w -b^4u+2b^2uw +
    2b^2w -4u^3w^2-2u^2w^2+uw^2  + 2w^2)$$$$+
    \frac{4u^2w^2+uw^2
    - 2b^2uw+w^2}{l^2}-\frac{uw^2}{l^4}\Big)t^6$$
    $$ + \frac{b^3ul^2 -
    bu^2l^4w - 2bu^2l^2w - bul^4w + buw}{l^2w}t^5 $$$$+
    \frac{\frac{1}{4}b^4l^4(l^2 - 1) - (ul^4 +\frac{1}{2}l^2 - \frac{1}{2})b^2l^2w +w^2(-u^2l^4(l^2-1)-ul^2(l^4+1)-
    \frac{1}{4}(l^6+l^4+l^2-1)}{l^4w}t^4 $$$$+
    \frac{(b^3l^2 - 2bul^4w - 2bul^2w - bl^4w + bw)}{l^2}t^3 +
    \frac{1}{2}(-b^2l^2w - 2b^2w - 2ul^2w^2-l^2w^2+w^2)t^2 -
    bl^2w^2t$$

    Then we have the two independent sections $$B_1=\left[\frac{b^2l^4-b^2l^2 + 2ul^2w +l^4w -w}{2l^2w}t^2 +
    (bl^2 + b)t + l^2w,
\frac{(b^2l^2 + l^2w + w)}{2lw}t^3\right]$$ and $$B_2=
\left[ut^2 + w
,\frac{b^2l^2+l^2w+w}{2lw}t^3\right].$$ Moreover, any point $(X_0,T_0)$ on the conic $C_{l,w}:X^2=(-l^2w - w)T^2 - l^2w^2$ lead to a point on $\E_T$ which is non-torsion and does not depend on $B_1 (T_0)$ and $B_2(T_0)$. As a consequence, given that $C_{l,w}$ admits points, there are infinitely many fibers of $\E$ with rank at least 3.
\end{theorem}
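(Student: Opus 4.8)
The plan is to reuse the three-step template behind Theorem~\ref{thm:main1} --- exhibit two non-torsion independent sections, then produce a third independent point from a genus-$0$ bisection --- and close with Silverman's specialization theorem. First I would verify that $B_1$ and $B_2$ really lie on $\E$: substituting the displayed coordinates into the Weierstrass equation of $\E$ and clearing denominators reduces this to the vanishing of the polynomial conditions assembled in Section~\ref{sec:bisections} after the appropriate identification of parameters, a heavy but purely mechanical identity best handed to a computer algebra system. That $B_1,B_2$ are non-torsion then follows exactly as in Lemma~\ref{lemma:ntsection}: compute the $x$-coordinate of $[2]B_i$ and observe it is not a polynomial in $t$, so Lutz--Nagell over the local rings of $\Q[t]$ forbids torsion; alternatively one may specialize at a single value of $t$ and check non-torsion of the resulting explicit curve.

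For the independence of $B_1$ and $B_2$ over $\Q(t)$ I would argue as in Section~\ref{sec:indep}. The surface $\E$ has no reducible fibres --- generically its discriminant is an irreducible polynomial of degree $12$, as recorded in the Remark preceding this theorem --- so $\E$ contracts to a degree-$1$ del Pezzo surface, $\chi(\E)=1$, and Shioda's height pairing reads $\langle P,P\rangle=2+2(P\cdot O)$, $\langle P,Q\rangle=1+(P\cdot O)+(Q\cdot O)-(P\cdot Q)$. Both $B_i$ have $x$ of degree $\le 2$ and $y$ of degree $\le 3$ in $t$, hence meet neither the zero section nor each other at infinity; and since $y(B_1)=y(B_2)$, they meet only at the (at most two) zeros of the nonzero polynomial $x(B_1)-x(B_2)$, so $(B_1\cdot B_2)\le 2$ and the Gram matrix $\left(\begin{smallmatrix} 2 & 1-(B_1\cdot B_2)\\ 1-(B_1\cdot B_2) & 2\end{smallmatrix}\right)$ is nonsingular. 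Hence $B_1,B_2$ are independent and the generic rank is $\ge 2$; a single Magma rank check on one fibre combined with Silverman's theorem is an acceptable shortcut.

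For the third point I would use the genus-$0$ bisection $\mathcal{B}\subset\E$ furnished by the construction of Section~\ref{sec:bisections}, whose $x$-coordinate is a quadratic in $t$ and whose $y$-coordinate has the shape $t\,\sqrt{q(t)}$ with $q$ the quadratic appearing in $C_{l,w}$; checking $\mathcal{B}\subset\E$ is one further polynomial identity, and $\mathcal{B}$ is a genus-$0$ curve whose smooth model is isomorphic over $\Q$ to the conic $C_{l,w}:X^2=(-l^2w-w)T^2-l^2w^2$, via $T=t$, $X=y/t$. Base-changing $\E\to\P^1_t$ along the degree-$2$ map $\mathcal{B}\to\P^1_t$ yields an elliptic surface $\E'$ over $\mathcal{B}\cong C_{l,w}$ carrying a section $\sigma$ coming from $\mathcal{B}$ together with the pullbacks $B_1',B_2'$ of $B_1,B_2$; equivalently one gets a point $\mathcal{P}\in\E(\Q(C_{l,w}))$, where $\Q(C_{l,w})=\Q(t)\bigl[\sqrt{q(t)}\,\bigr]$, together with $B_1,B_2\in\E(\Q(t))\subset\E(\Q(C_{l,w}))$. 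The key step is that $\mathcal{P},B_1,B_2$ are independent in $\E(\Q(C_{l,w}))\otimes\Q$; I would establish this by exhibiting a single rational point $(X_1,T_1)\in C_{l,w}(\Q)$ at which the three specialized points have nonsingular $3\times3$ regulator on $\E_{T_1}(\Q)$ (a finite computation), which forces generic rank $\ge 3$ on $\E'$. Finally, since $C_{l,w}$ has a rational point it is isomorphic to $\P^1_\Q$ and $C_{l,w}(\Q)$ is infinite; applying Silverman's specialization theorem to $\E'\to C_{l,w}$, for all but finitely many $(X_0,T_0)\in C_{l,w}(\Q)$ the specialization map is injective, so $\mathcal{P}(T_0),B_1(T_0),B_2(T_0)$ are independent and $\E_{T_0}$ has rank $\ge 3$; this gives infinitely many such fibres. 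Alternatively one may cite \cite{LS20} directly for this last conclusion, as in Remark~\ref{rem:LS}, once generic rank $\ge 2$ and the bisection are available.

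The main obstacle is the independence of the three sections on $\E'$: unlike $\E$, this base-changed surface need not be rational --- it is a K3 or a surface of general type, cf.\ the remark following Example~\ref{ex:rank2withconic} --- so its Mordell--Weil lattice is not rigidly controlled and the intersection argument used for $B_1,B_2$ does not transfer; the reduction to one explicit fibre plus a Magma computation circumvents this, at the cost of checking that the chosen fibre avoids the finite exceptional set of the specialization map. A secondary nuisance is simply the size of the polynomial identities certifying that $B_1$, $B_2$ and the bisection lie on $\E$, which are best discharged with computer algebra rather than by hand.
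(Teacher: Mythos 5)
Your proposal matches the paper's (largely implicit) proof: the paper gives no separate argument for Theorem~\ref{thm:rank2withconic}, only the earlier remark that it is proved ``in the same way'' as Theorem~\ref{thm:introresult}, i.e.\ by verifying the points lie on $\E$, a Lutz--Nagell computation as in Lemma~\ref{lemma:ntsection}, an intersection/Gram-matrix argument as in Lemma~\ref{lemma:ind}, and the conic handled via the base change as in \cite{LS20}. Your write-up follows exactly this template and is in fact more careful than the paper on the independence step (using the canonical height pairing with the no-reducible-fibre correction terms rather than the raw intersection matrix) and on how the third point's independence and the passage to infinitely many fibres via Silverman specialization are certified.
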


By \cite{LS20} using that conic leads by base change to a elliptic surface on which that conic is transformed into a section. We can find explicit elliptic surfaces on which the conic is already a pair of rational sections (Theorem \ref{thm:introresult}), or some where like in Walsh's example the conic is not a union of two rational sections (Theorem \ref{thm:rank2withconic}). 

\begin{example}\label{ex:jump2}
    Let $\E$ be the rational elliptic surface given by the equation $$\E:y^2=x^3+ 2x^2+(
-t^4+2t^3+t^2-2t-1)x+
4t^3+3t^2-4t-2$$
    
     $B_1=[-t^2+t+1,t\sqrt{2t+1}]$ and $B_2=[t^2-t-1,t\sqrt{2t+1}]$ are non-torsion and independant from one another in the Mordell-Weil group.
    
    Therefore, there are infinitely many fibers of $\E$ where the rank is 2. 
    By \cite{Desjardins1}, there are infinitely many fibers of $\E$ where the rank is 3.
\end{example}

\begin{remark}
    Example \ref{ex:jump2} is hard to construct because we not only need two bisections of genus 0, but also that they have rational points at the same values of $t$. In that example, the two values in $y$ are the same. We still have not found an example with three bisections that have rational points for the same values of $t$.
\end{remark}

\bibliographystyle{amsalpha}
\bibliography{DensityDP1}

\end{document}